\documentclass[11pt]{amsart}

\usepackage{amsfonts,amssymb,amsmath,amsthm,amscd,enumerate}
\usepackage{latexsym}
\usepackage{euscript}

\oddsidemargin -7mm
\evensidemargin -7mm
\topmargin -10mm
\textwidth 170mm
\textheight 235mm

\newtheorem{theorem}{Theorem}[section]

\newtheorem{lemma}[theorem]{Lemma}
\newtheorem{corollary}[theorem]{Corollary}

\newtheorem{conjecture}[theorem]{Conjecture}

\title{Products of commutators in a Lie nilpotent associative algebra}

\author{Galina Deryabina}

\address{Department of Computational Mathematics and Mathematical Physics (FS-11), Bauman Moscow State Technical University, 2-nd Baumanskaya Street, 5, 105005 Moscow, Russia}

\email{galina\_deryabina@mail.ru}

\author{Alexei Krasilnikov}

\address{Departamento de Matem\'atica, Universidade de Bras\'\i lia, 70910-900 Bras\'\i lia, DF, Brasil}

\email{alexei@unb.br}

\date{}

\begin{document}

\maketitle

\begin{abstract}
Let $F$ be a field and let $F \langle X \rangle$ be the free unital associative algebra over $F$ freely generated by an infinite countable set $X = \{ x_1, x_2, \dots \}$. Define a left-normed commutator $[a_1, a_2, \dots , a_n]$ recursively by $[a_1, a_2] = a_1 a_2 - a_2 a_1$, $[a_1, \dots , a_{n-1}, a_n] = [[a_1, \dots , a_{n-1}], a_n]$ $(n \ge 3)$. For $n \ge 2$, let $T^{(n)}$ be the two-sided ideal in $F \langle X \rangle$ generated by all commutators $[a_1, a_2, \dots , a_n]$ ($a_i \in F \langle X \rangle )$.

Let $F$ be a field of characteristic $0$. In 2008 Etingof, Kim and Ma conjectured that  $T^{(m)} T^{(n )} \subset T^{(m+n -1)}$ if and only if $m$ or $n$ is odd. In 2010 Bapat and Jordan confirmed the ``if'' direction of the conjecture:  if at least one of the numbers $m$, $n$ is odd then $T^{(m)} T^{(n)} \subset T^{(m + n -1)}.$ The aim of the present note is to confirm the ``only if'' direction of the conjecture. We prove that  if $m = 2 m'$ and $n = 2 n'$ are even then $T^{(m)} T^{(n)} \nsubseteq T^{(m +n -1)}.$ Our result is valid over any field $F$.
\end{abstract}

\noindent \textbf{2010 AMS MSC Classification:} 16R10, 16R40

\noindent \textbf{Keywords:} polynomial identity, product of ideals, commutators

\section{Introduction}

Let $F$ be a field. Let $X = \{x_1, x_2, \dots \} $ be an infinite countable set and let $F \langle X \rangle$ be the free associative algebra over $F$ freely generated by $X$. Define a left-normed commutator $[a_1, a_2, \dots , a_n]$ recursively by $[a_1, a_2] = a_1 a_2 - a_2 a_1$, $[a_1, \dots , a_{n-1}, a_n] = [[a_1, \dots , a_{n-1}], a_n]$ $(n \ge 3)$. For $n \ge 2$, let $T^{(n)}$ be the two-sided ideal in $F \langle X \rangle$ generated by all commutators $[a_1, a_2, \dots , a_n]$ ($a_i \in F \langle X \rangle )$.

In 2008 Etingof, Kim and Ma \cite{EKM09} made a conjecture (see Conjecture 3.6 in the arXiv version of \cite{EKM09}) that can be reformulated as follows: 

\begin{conjecture}[see \cite{EKM09}]
\label{conject}
Let $F$ be a field of characteristic $0$. Then  $T^{(m)} T^{(n)} \subset T^{(m + n -1)}$ if and only if $m$ or $n$ is odd. 
\end{conjecture}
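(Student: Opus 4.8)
The plan is to produce one explicit element of $T^{(m)}T^{(n)}$ and to prove that it lies outside $T^{(m+n-1)}$. The first candidate is the product of two left-normed commutators $w = [x_1, \dots , x_m][x_{m+1}, \dots , x_{m+n}]$, which is visibly in $T^{(m)}T^{(n)}$, so that the entire statement reduces to the single assertion $w \notin T^{(m+n-1)}$ (in characteristic $2$ one may have to replace $w$ by a suitable linear combination of such products, chosen so that the detecting coefficient below is odd). This is exactly the sharp counterpart of the Bapat--Jordan theorem, whose ``if'' direction asserts that this same $w$ does belong to $T^{(m+n-1)}$ once $m$ or $n$ is odd; I therefore expect the parity of $m$ and $n$ to be the crux, entering as a signed count that cancels when one index is odd and survives when both are even. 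Since $w$ is multilinear of degree $m+n$, I would first restrict attention to the multilinear component $P_{m+n}$ of $F\langle X\rangle$ and note that it suffices to exhibit a single linear functional on $P_{m+n}$ that annihilates $T^{(m+n-1)}\cap P_{m+n}$ yet is nonzero on $w$. In this degree $T^{(m+n-1)}\cap P_{m+n}$ is spanned by the elements $x_j[x_{i_1},\dots,x_{i_{m+n-1}}]$ and $[x_{i_1},\dots,x_{i_{m+n-1}}]x_j$, so the problem is reduced to concrete, characteristic-free linear algebra over the prime field.

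To build the required functional I would realize it as an evaluation in an explicitly constructed associative algebra $A$ that is Lie nilpotent of class exactly $m+n-2$, together with a coordinate functional on $A$. The guiding example is the case $m=n=2$ over a field of characteristic different from $2$: there the Grassmann algebra $E$ satisfies $T^{(3)}(E)=0$ and sends $w$ to $4\,e_1e_2e_3e_4\neq0$, and the associated functional on $P_4$ is the sign, i.e.\ full antisymmetrization. The obstacle this already reveals is that the Grassmann/sign functional degenerates in characteristic $2$, and that, for $m$ or $n$ at least $3$, even over $\mathbb{Q}$ it kills $w$, since long left-normed commutators vanish in $E$. I would therefore look for a model assembled from strictly upper triangular matrices, whose commutators of matrix units carry no factor of $2$, arranged so that the first commutator block and the second occupy independent ``directions'' and their product reaches a distinguished basis vector while a single long commutator does not. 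Designing $A$ and the coordinate so that the coordinate genuinely vanishes on \emph{all} of $T^{(m+n-1)}$ is delicate: a naive tensor product of two nilpotent matrix algebras fails, because a single $(m+n-1)$-fold commutator can already reach the would-be distinguished vector, so the model must be refined, or a further quotient taken, until only the two-block product survives.

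The main obstacle, then, is the verification that the chosen functional annihilates $T^{(m+n-1)}$ while taking a nonzero value on $w$, carried out uniformly in the characteristic. I expect this to come down to a signed enumeration of the ways an $(m+n-1)$-fold commutator can contribute to the distinguished coefficient, and the heart of the matter is to show that this integer is $0$ when $m$ or $n$ is odd, recovering the Bapat--Jordan containment as a consistency check, and an \emph{odd} integer, hence a unit in every field $F$, when $m=2m'$ and $n=2n'$ are both even. The parity hypothesis is thus precisely what makes the argument characteristic-free, including the troublesome case $\operatorname{char} F = 2$ where every sign-based device collapses. One economical way to organize the argument would be to settle the base case $m=n=2$ over an arbitrary field by the finite linear-algebra computation above, and then to pass from $(m,n)$ to $(m+2,n)$ and to $(m,n+2)$ by appending two entries to a commutator block, checking that this operation preserves non-membership; establishing the correctness of such an index-raising step, or alternatively controlling the signed count directly for all even $m,n$, is where the real work lies.
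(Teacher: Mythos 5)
Your high-level reduction is exactly the paper's: it suffices to build a unital algebra $A$ that is Lie nilpotent of class at most $m+n-2$ together with elements $v_1,\dots,v_m,w_1,\dots,w_n\in A$ with $[v_1,\dots,v_m][w_1,\dots,w_n]\neq 0$, and then pull back along the evaluation homomorphism $F\langle X\rangle\to A$ (this is how the paper deduces its Theorem 1.3 from Theorem 1.4). But the proposal stops precisely where the content of the paper begins: you never produce $A$. The one concrete candidate you offer (strictly upper triangular matrices, or a tensor product of two nilpotent matrix algebras) you yourself concede fails, because a single $(m+n-1)$-fold commutator already reaches the distinguished coefficient; and the fallback plan, an index-raising induction $(m,n)\to(m+2,n)$, is not formulated, let alone proved --- there is no mechanism given by which ``appending two entries to a block preserves non-membership.'' For comparison, the paper's construction (for $\mathrm{char}\,F\neq 2$) is $A=E\otimes E_r$ with $r=m+n-4$, and the engine is an explicit formula (Lemma 2.1) for left-normed commutators of elementary tensors in a tensor product of two Lie nilpotent algebras of class $2$: such commutators split into products of $2$-commutators distributed over the two factors. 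Taking $v_1,v_{2m'},w_1,w_{2n'}$ of the form $g\otimes 1$ makes each even block deposit only $m'-1$ (resp.\ $n'-1$) commutators into the finite Grassmann factor $E_r$, so the two blocks together need exactly $2(m'+n'-2)=r$ generators and survive, whereas any single commutator of length $m+n-1$ forces $m'+n'-1$ commutators into $E_r$ and dies. Note that parity enters there structurally --- the ``cap the block with $1$'s'' trick only saves a commutator when the block has even length --- not through a signed count that is odd exactly in the even-even case; and for $\mathrm{char}\,F=2$ the paper needs a genuinely different model (a quotient $F\mathcal G/I$ of a group algebra of a nilpotent $2$-group), so your hope for one characteristic-free integral argument is unsubstantiated.

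There is also a concrete false step: the claim that $T^{(m+n-1)}\cap P_{m+n}$ is spanned by $x_j[x_{i_1},\dots,x_{i_{m+n-1}}]$ and $[x_{i_1},\dots,x_{i_{m+n-1}}]x_j$. The ideal is generated by commutators of arbitrary elements, so its multilinear part also contains elements with a degree-$2$ monomial inside a slot, and these are not in your span: for example
\[
[x_1x_2,x_3,x_4]=x_1[x_2,x_3,x_4]+[x_1,x_3,x_4]x_2+[x_1,x_3][x_2,x_4]+[x_1,x_4][x_2,x_3],
\]
which contributes products of shorter commutators. A functional annihilating only your spanning set would therefore prove nothing. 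Evaluation in a genuinely Lie nilpotent algebra kills all of $T^{(m+n-1)}$ at once and sidesteps this issue --- which is another reason the paper works entirely inside such an algebra --- but that again puts all the weight on the construction you have not supplied.
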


\noindent
In \cite{EKM09} this conjecture was confirmed for $m$ and $n$ such that $m + n \le 7$. In 2010 Bapat and Jordan \cite[Corollary 1.4]{BJ10} confirmed the ``if'' direction of the conjecture for arbitrary $m, n$. 

\begin{theorem}[see \cite{BJ10}]
\label{BJ}
Let $F$ be a field of characteristic $\ne 2,3$. Let $m, n \in \mathbb Z,$ $m, n >1$ and at least one of the numbers $m$, $n$ is odd. Then
\begin{equation}
\label{tmtn}
T^{(m)} T^{(n)} \subset T^{(m + n -1)}.
\end{equation}
\end{theorem}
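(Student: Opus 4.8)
The plan is to reduce the containment to a statement about the product of two single commutators and then to isolate the parity as the essential difficulty. Since $T^{(m+n-1)}$ is a two-sided ideal and $T^{(m)}$, $T^{(n)}$ are the two-sided ideals generated by the commutators of length $m$ and $n$ respectively, it suffices to prove that $c\,w\,d\in T^{(m+n-1)}$ for all $c=[a_1,\dots,a_m]$, $d=[b_1,\dots,b_n]$ and all $w\in F\langle X\rangle$ (the outer factors of a general element of $T^{(m)}T^{(n)}$ can be absorbed into the two-sided ideal $T^{(m+n-1)}$, leaving a single ``middle word'' $w$). First I would dispose of $w$ by induction on $\deg w$, using that $\mathrm{ad}_u\colon v\mapsto[v,u]$ is a derivation and that $[y,d]=-[b_1,\dots,b_n,y]$ is a single commutator of length $n+1$.

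Concretely, assume $m$ is odd; the case $n$ odd is symmetric via the reversal anti-automorphism $*$ of $F\langle X\rangle$, under which $[a_1,\dots,a_k]^*=(-1)^{k-1}[a_1^*,\dots,a_k^*]$ and each $T^{(\ell)}$ is invariant, so that $T^{(m)}T^{(n)}\subseteq T^{(m+n-1)}$ is equivalent to $T^{(n)}T^{(m)}\subseteq T^{(m+n-1)}$. Writing $w=w'y$ and expanding
\[
c\,w\,d=(c\,w'\,d)\,y+c\,w'\,[y,d],
\]
the first summand lies in $T^{(m+n-1)}$ by the inductive hypothesis on $\deg w$, while the second is a product of the length-$m$ commutator $c$, the shorter word $w'$, and the length-$(n+1)$ commutator $[y,d]$, so it falls under the same statement with target $T^{(m+n)}\subseteq T^{(m+n-1)}$. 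Since $m$ stays odd throughout, this double induction (on $\deg w$, with $n$ allowed to grow) is well founded and terminates at the base case $\deg w=0$. Hence everything reduces to the key proposition: \emph{for $m$ odd and every $n\ge 2$, the adjacent product $[a_1,\dots,a_m][b_1,\dots,b_n]$ lies in $T^{(m+n-1)}$.}

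For the key proposition I would pass to the multilinear component, taking the $a_i$ and $b_j$ to be distinct variables, and study the class of $p\,q$ (with $p=[x_1,\dots,x_m]$, $q=[x_{m+1},\dots,x_{m+n}]$) in $T^{(m+n-2)}/T^{(m+n-1)}$, using the Gupta--Levin containment $T^{(m)}T^{(n)}\subseteq T^{(m+n-2)}$ as the starting point. Here the parity should enter through two facts: modulo $T^{(m+n)}$ the commutators $p$ and $q$ commute, so in characteristic $\ne 2$ it suffices to place the Jordan product $pq+qp$ into $T^{(m+n-1)}$; and the reversal satisfies $p^*=(-1)^{m-1}p=p$ and $q^*=(-1)^{n-1}q$, so this Jordan product is a $*$-eigenvector whose eigenvalue depends on the parity of $n$. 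The hard part — and the real obstacle — is to show that this eigenvalue constraint, read against the structure of $T^{(m+n-2)}/T^{(m+n-1)}$ as a module over the symmetric group with its induced $*$-action, forces the class of $pq$ to vanish; this is exactly the step that breaks when $m$ and $n$ are both even, and it is where the hypothesis $\operatorname{char}F\ne 2,3$ is consumed (to divide out the small factors occurring in the relevant identities). I would expect to anchor the induction underlying this computation on the cases $m+n\le 7$ already verified in \cite{EKM09}, and to carry out the general step either through an explicit commutator identity or, following \cite{BJ10}, by transporting the problem to the free algebra of a symmetric tensor category, where the sign $(-1)^{mn}$ governing the parity is built into the braiding.
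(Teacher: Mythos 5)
First, a point of orientation: the paper you were given does not prove this statement at all --- it is Theorem \ref{BJ}, imported verbatim from Bapat and Jordan \cite{BJ10}; the paper's own contribution is the opposite (``only if'') direction, Theorem \ref{maintheorem1}. So your attempt has to stand on its own, and on its own it is not a proof. Your first two paragraphs are correct and do real work: the identity $c\,w\,d=(c\,w'\,d)\,y+c\,w'\,[y,d]$, the absorption of outer factors into the two-sided ideal $T^{(m+n-1)}$, the behaviour of left-normed commutators under the reversal anti-automorphism, and the double induction on $(\deg w, n)$ (well founded because $\deg w$ strictly decreases while the target ideal keeps pace as $n$ grows) correctly reduce the theorem to the adjacent-product statement: for $m$ odd, $[a_1,\dots,a_m][b_1,\dots,b_n]\in T^{(m+n-1)}$. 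This reduction is standard; it is essentially how the weaker Latyshev/Gupta--Levin containment $T^{(m)}T^{(n)}\subset T^{(m+n-2)}$ (Remark 1 of the paper) is obtained.

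The gap is that the adjacent-product statement is not a lemma on the way to the theorem --- it \emph{is} the theorem, and your third paragraph does not prove it. You label it yourself ``the hard part --- and the real obstacle'' and then offer to settle it ``either through an explicit commutator identity'' (a placeholder, not an argument) ``or, following \cite{BJ10}, by transporting the problem to the free algebra of a symmetric tensor category'' (circular, since \cite{BJ10} is the result being proved). Moreover, the parity mechanism you sketch cannot be the right one even in outline: if $m$ and $n$ are \emph{both} odd --- a case the theorem covers --- then $(pq+qp)^*=pq+qp$, so the $*$-eigenvalue constraint you want to exploit is vacuous, yet the containment still holds; and in the case $n$ even, knowing that $pq+qp$ lies in the $(-1)$-eigenspace of $*$ on $T^{(m+n-2)}/T^{(m+n-1)}$ is of no use without a proof that this eigenspace is trivial, which is precisely the structural information about these quotients that constitutes the difficulty. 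Finally, anchoring on the cases $m+n\le 7$ from \cite{EKM09} presupposes an induction on $m+n$ whose inductive step you never formulate. In short: you have a correct and clean reduction, followed by a restatement of the problem rather than a solution of it.
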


The aim of the present note is to confirm the ``only if'' direction of the conjecture. Our main result is as follows.

\begin{theorem}
\label{maintheorem1}
Let $F$ be a field and let $m = 2m'$, $n = 2n'$ be arbitrary positive even integers. Then
\[
T^{(m)} T^{(n)} \nsubseteq T^{(m + n -1)}.
\]
\end{theorem}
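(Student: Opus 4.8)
The plan is to exhibit a single explicit element of $T^{(m)}T^{(n)}$ that escapes $T^{(m+n-1)}$ and to detect this by evaluating in a suitable Lie nilpotent associative algebra. Put $N = m+n$ and take the multilinear element
\[
w = [x_1, \dots , x_m]\,[x_{m+1}, \dots , x_{m+n}] \in T^{(m)} T^{(n)} .
\]
Since $T^{(N-1)}$ is a multihomogeneous ideal and $w$ is multilinear in $x_1, \dots , x_N$, we have $w \in T^{(N-1)}$ if and only if $w$ lies in the multilinear component $P_N \cap T^{(N-1)}$, where $P_N$ denotes the space of multilinear polynomials of degree $N$ in $x_1, \dots , x_N$. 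Thus the whole problem reduces to producing a linear functional on $P_N$ (equivalently, a trace on a relatively free algebra) that annihilates $P_N \cap T^{(N-1)}$ but is nonzero on $w$.

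First I would record what $P_N \cap T^{(N-1)}$ actually is. Its spanning set consists of the multilinear evaluations of the generators $[a_1, \dots , a_{N-1}]$ of $T^{(N-1)}$, so besides the ``obvious'' elements $x_j\,[c]$ and $[c]\,x_j$ (with $[c]$ a left-normed length-$(N-1)$ commutator in single variables) it also contains the commutators in which exactly one entry is a degree-two monomial, e.g.\ $[x_ix_j, x_{k_1}, \dots , x_{k_{N-2}}]$. These last elements are the source of the difficulty: expanding them via the Leibniz rule $[x_ix_j,y] = x_i[x_j,y] + [x_i,y]x_j$ produces precisely the products of Lie elements whose ``shape'' overlaps the shape $(m,n)$ of $w$, so $w$ is \emph{not} separated from $T^{(N-1)}$ by a naive Poincar\'e--Birkhoff--Witt/shape argument.

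The detecting functional is to be obtained by mapping $F\langle X \rangle$ onto an explicit Lie nilpotent algebra $A$ of class $N-2$ and composing with a top-degree trace. For $m=n=2$ one may take $A$ to be the Grassmann (exterior) algebra $E$: the assignment $x_i \mapsto e_i$ kills $T^{(3)} = T^{(N-1)}$ because $E$ is Lie nilpotent of class $2$, while $w \mapsto [e_1,e_2][e_3,e_4] = 4\,e_1e_2e_3e_4 \ne 0$. For larger even $m,n$ the class must be raised to $N-2 \ge 6$, so $E$ alone is hopeless (every commutator of length $\ge 3$ vanishes in $E$, hence $T^{(m)}T^{(n)}$ maps to $0$ there once $m,n \ge 3$). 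One concrete route is to enlarge $E$ by adjoining finitely many nilpotent odd derivations, chosen so that $A$ is Lie nilpotent of class exactly $N-2$, carries a top-degree linear functional, and sends $w$ to a nonzero top-degree element. Everything here is defined over the prime field, which yields the conclusion over an arbitrary $F$.

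The main obstacle is exactly the construction and verification of $A$ (equivalently, of the functional $\Phi$): one must simultaneously check that $\Phi$ annihilates every generator of $P_N \cap T^{(N-1)}$ — including the subtle degree-two-entry commutators above — and that $\Phi(w) \ne 0$. The evenness of $m$ and $n$ is indispensable at this last step: by Theorem~\ref{BJ}, if either $m$ or $n$ is odd then $w \in T^{(N-1)}$, which forces $\Phi(w)=0$ for \emph{every} admissible $\Phi$. Hence the evaluation of $\Phi(w)$ must rest on a sign/parity cancellation that fails to occur precisely when both $m$ and $n$ are even. Isolating this parity invariant and confirming that it does not vanish is the crux of the argument.
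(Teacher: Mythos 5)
Your overall strategy is the right one, and it is in fact the paper's own skeleton: reduce Theorem \ref{maintheorem1} to exhibiting a unital algebra $A$ that is Lie nilpotent of class at most $m+n-2$ together with elements for which $[v_1,\dots,v_m][w_1,\dots,w_n]\ne 0$, then pull back along the homomorphism $F\langle X\rangle \to A$ sending $x_1,\dots,x_{m+n}$ to these elements and the remaining variables to $0$ (this is exactly Theorem \ref{maintheorem2} and its use in the paper). Incidentally, once you adopt this route, your multilinear reduction and your careful inventory of the generators of $P_N\cap T^{(N-1)}$ (including the commutators with a degree-two entry) become unnecessary: \emph{any} homomorphism onto an algebra of Lie nilpotency class $\le m+n-2$ annihilates the entire ideal $T^{(m+n-1)}$, degree-two entries and all, so there is nothing to check on that side.

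The genuine gap is that you never construct $A$, and that construction is where all the difficulty of the theorem lives — as you yourself acknowledge. Your one concrete proposal, to ``enlarge $E$ by adjoining finitely many nilpotent odd derivations,'' is left entirely undeveloped: no algebra is defined, its Lie nilpotency class is not verified, and the nonvanishing of the image of $w$ (your ``parity invariant'') is not established; nothing in the sketch indicates how evenness of both $m$ and $n$ would enter. For comparison, the paper takes $A=E\otimes E_r$ with $r=m+n-4$: Lemma \ref{cl} gives a closed formula for left-normed commutators of elementary tensors when both factors are Lie nilpotent of class $2$; Corollary \ref{nilp} then shows $A$ has class $\le 2k$ once every product of $k$ commutators vanishes in $E_r$ (true for $k=m'+n'-1$ because $2k>r$); and Corollary \ref{nilp2}, applied to the staggered choice $v_1=e_1\otimes 1$, $v_i=e_i\otimes e_{i-1}$, $v_{2m'}=e_{2m'}\otimes 1$ (and similarly for the $w_j$), yields $[v_1,\dots,v_m][w_1,\dots,w_n]=2^{m+n-2}\,e_1\cdots e_{m+n}\otimes e_1\cdots e_{m+n-4}\ne 0$. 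Note also that your closing claim that ``everything here is defined over the prime field, which yields the conclusion over an arbitrary $F$'' is not tenable: the Grassmann construction collapses in characteristic $2$ (there $[e_i,e_j]=2e_ie_j=0$, matching the vanishing of the scalar $2^{m+n-2}$ above), and the paper must run a completely separate characteristic-$2$ argument using a quotient of the group algebra of a nilpotent group of class $2$ (Lemmas \ref{nilp22} and \ref{notin}). So the characteristic-$2$ case is a second substantive gap, not a remark.
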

 
Recall that an associative algebra $A$ is Lie nilpotent of class at most $c$ if $[u_1, \dots , u_c , u_{c+1}]=0$ for all $u_i \in A$. We deduce Theorem \ref{maintheorem1} from the following result.

\begin{theorem}
\label{maintheorem2}
Let $F$ be a field and let $m=2m'$, $n = 2n'$ be arbitrary positive even integers.  Then there exists a  unital associative algebra $A$ such that the following two conditions are satisfied:

i) for all $u_1, u_2, \dots , u_{m + n -1} \in A$ we have 
\[
[u_1, u_2, \dots , u_{m + n -1}]=0,
\]
that is, the algebra $A$ is Lie nilpotent of class at most $m+n-2$;

ii) there are $v_1,  \dots , v_{m} , w_1, \dots , w_{n} \in A$ such that 
\[
[v_1, \dots , v_m] [w_{1}, \dots , w_{n}] \ne 0.
\]
\end{theorem}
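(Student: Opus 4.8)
The plan is to prove Theorem~\ref{maintheorem2} directly, since Theorem~\ref{maintheorem1} is an immediate consequence: given a unital $A$ as in Theorem~\ref{maintheorem2}, choose the homomorphism $\varphi\colon F\langle X\rangle \to A$ with $\varphi(x_i)=v_i$ for $i\le m$ and $\varphi(x_{m+j})=w_j$ for $j\le n$. By condition (i) the whole ideal $T^{(m+n-1)}$ lies in $\ker\varphi$, since it is generated by commutators of length $m+n-1$; by condition (ii) the element $[x_1,\dots,x_m][x_{m+1},\dots,x_{m+n}]\in T^{(m)}T^{(n)}$ is not sent to $0$. Hence $T^{(m)}T^{(n)}\nsubseteq T^{(m+n-1)}$, and everything reduces to producing $A$. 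The guiding example is $m=n=2$ over a field of characteristic $0$: the Grassmann algebra is Lie nilpotent of class $2=m+n-2$, yet for odd generators $[e_1,e_2][e_3,e_4]=4\,e_1e_2e_3e_4\neq0$. The task is to find, for all even $m,n$ and over \emph{every} field, an algebra playing the role of this Grassmann model.

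I would construct $A$ as an explicit quotient of the relatively free algebra $F\langle z_1,\dots,z_{m+n}\rangle/T^{(m+n-1)}$, so that condition (i) holds automatically (a quotient of a Lie nilpotent algebra of class $\le m+n-2$ is again Lie nilpotent of class $\le m+n-2$), and then engineer the defining relations so that $A$ carries a distinguished monomial basis, defined over $\mathbb{Z}$, in which the image of $[z_1,\dots,z_m][z_{m+1},\dots,z_{m+n}]$ is a single nonzero basis word. A rewriting argument of diamond-lemma / Gr\"obner--Shirshov type would be used to establish the normal form, after which condition (ii) follows by merely reading off the surviving word; equivalently, one exhibits a coordinate functional on $A$ that is nonzero on the product. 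Since the basis and the rewriting rules are integral and sign-free, the verification is purely combinatorial and insensitive to $\mathrm{char}\,F$, which is what yields validity over an arbitrary field. Setting $v_i=z_i$ and $w_j=z_{m+j}$ then completes the construction.

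The hard part is the simultaneous fulfilment of (i) and (ii), and it is exactly where the evenness of $m$ and $n$ must enter. The obvious candidates fail instructively. A single Grassmann algebra is Lie nilpotent only of class $2$, too small once $\max(m,n)>2$. The tensor product $A=B_1\otimes B_2$ of the unitalised strictly upper triangular algebras $B_1=N_{m+1}$, $B_2=N_{n+1}$ does make $[v_1,\dots,v_m]=E_{1,m+1}\otimes 1$ and $[w_1,\dots,w_n]=1\otimes E_{1,n+1}$ visibly nonzero with nonzero product, but it fails (i): writing $e=E_{12}$, $f=E_{23}$, $g=E_{13}=ef$ in $B_1$ and $e',f',g'$ for the corresponding elements of $B_2$, for $m=n=2$ one computes the length $m+n-1=3$ commutator
\[
[\,e\otimes 1,\ f\otimes e',\ 1\otimes f'\,]=g\otimes g'=[v_1,v_2][w_1,w_2],
\]
and an analogous interleaved commutator of length $m+n-1$ is nonzero for general $m,n$. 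Thus in the tensor product the very element we wish to keep is itself a commutator of length $m+n-1$, so it cannot survive. The crux is therefore to cut down the relations so that \emph{every} commutator of length $m+n-1$ vanishes while the product stays outside the ideal; proving that the product is not congruent to any length-$(m+n-1)$ commutator is the main obstacle. This is the precise point at which parity is decisive: for odd $m$ or $n$ the product does collapse into $T^{(m+n-1)}$ by Theorem~\ref{BJ}, so any correct argument must use $m=2m'$, $n=2n'$ in an essential way, and the requirement of characteristic-independence rules out the characteristic-zero functional $\sigma\mapsto\mathrm{sgn}(\sigma)$ that suffices for the Grassmann case, forcing the entire computation to be carried out with an integral, sign-free basis.
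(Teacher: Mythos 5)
Your proposal is not a proof of Theorem~\ref{maintheorem2}: the algebra $A$ is never constructed. Your second paragraph describes the \emph{properties} the desired quotient should have (defining relations admitting an integral, sign-free monomial normal form in which the image of $[z_1,\dots ,z_m][z_{m+1},\dots ,z_{m+n}]$ is a surviving basis word), but no relations are written down, no rewriting system is given, and no diamond-lemma verification is even sketched; since the theorem is precisely the assertion that such an algebra exists, asserting its existence is the gap, not a step over it. Moreover, your choice of ambient algebra makes the plan essentially circular: if $A$ is any quotient of $F\langle z_1,\dots ,z_{m+n}\rangle /T^{(m+n-1)}$ satisfying condition ii), then already $[z_1,\dots ,z_m][z_{m+1},\dots ,z_{m+n}]\notin T^{(m+n-1)}$ in the free algebra on $m+n$ generators, which is exactly the finitely generated form of Theorem~\ref{maintheorem1} that one is ultimately trying to prove; passing to a further quotient can only make ii) harder to satisfy, never easier, so all of the difficulty survives intact. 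Your negative observations are correct and well taken (a single Grassmann algebra is too small once $\max (m,n)>2$; in $N_{m+1}\otimes N_{n+1}$ the product collapses to an interleaved commutator of length $m+n-1$), and your deduction of Theorem~\ref{maintheorem1} from Theorem~\ref{maintheorem2} matches the paper's, but these are the easy parts; you yourself name ``proving that the product is not congruent to any length-$(m+n-1)$ commutator'' as the main obstacle, and it is left untouched.

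For comparison, the paper overcomes this obstacle without any normal-form computation in relatively free algebras. It takes $A=G\otimes H$ with \emph{both} factors Lie nilpotent of class $2$, where $G$ is infinitely generated and $H$ is generated by exactly $r=m+n-4=2(m'+n'-2)$ elements: $G=E$ and $H=E_r$ when $\mathrm{char}\,F\ne 2$, and quotients $F\mathcal G/I$, $F\mathcal G_r/I_r$ of group algebras of a class-$2$ nilpotent group when $\mathrm{char}\,F=2$. The parity of $m$ and $n$ enters through a closed formula for left-normed commutators of elementary tensors (Lemma~\ref{cl}): a commutator of odd length $2k+1=m+n-1$, $k=m'+n'-1$, is a sum of two terms each carrying a product of $k$ commutators in the $H$-slot, and such products vanish because $H$ has too few generators, $r<2k$ (Corollary~\ref{nilp}) --- this gives i); whereas a commutator of even length $2m'$ with scalar endpoints $g_1\otimes 1$, $g_{2m'}\otimes 1$ spends only $m'-1$ commutators in the $H$-slot (Corollary~\ref{nilp2}), so the product in ii) spends $(m'-1)+(n'-1)=r/2$ of them, exactly fitting inside $H$, and is computed explicitly to be nonzero. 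This counting asymmetry --- odd-length commutators are expensive in $H$, even-length ones with well-chosen entries are cheap --- is the missing idea. Note also that characteristic independence is achieved in the paper not by one sign-free construction, as you predict it must be, but by two parallel constructions: a signed (Grassmann) one for $\mathrm{char}\,F\ne 2$ and a separate group-algebra one for $\mathrm{char}\,F=2$.
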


\noindent
If $F$ is a field of characteristic $\ne 2$ then in Theorem \ref{maintheorem2} one can take $A = E \otimes E_r$ where  $E$ is the infinite-dimensional unital Grassmann algebra and $E_r$ is the $r$-generated unital Grassmann algebra for  $r = m + n -4$.

\bigskip
\noindent
\textbf{Remarks.}
1. Note that if $k > \ell$ then $T^{(k)} \subset T^{(\ell )}$; in particular, $T^{(m+n-1)} \subset T^{(m+n-2)}.$ Let $R$ be an arbitrary associative and commutative unital ring and let $m, n \in \mathbb Z,$ $m, n >1$. Then in $R \langle X \rangle$ we have
\[
T^{(m)} T^{(n )} \subset T^{(m+n -2)}.
\]
This assertion was proved by Latyshev \cite[Lemma 1]{Latyshev65} in 1965 (Latyshev's paper was published in Russian) and independently rediscovered by Gupta and Levin \cite[Theorem 3.2]{GL83} in 1983.

2. The proof of Theorem \ref{BJ} given in \cite{BJ10} is valid for algebras over  an associative and commutative unital ring $R$ such that $\frac{1}{6} \in R$. In fact, Theorem \ref{BJ} holds over any $R$ such that $\frac{1}{3} \in R$ (see \cite[Remark 3.9]{AE15} for explanation). Moreover, for some $m$ and $n$ (\ref{tmtn}) holds over an arbitrary ring $R$: for instance, $T^{(3)} T^{(3)} \subset T^{(5)}$ in $R \langle X \rangle$ for any $R$ (see \cite[Lemma 2.1]{CostaKras13}). However, in general Theorem \ref{BJ} fails over $\mathbb Z$ and over a field of characteristic $3$: it was shown in \cite{DK15,Kr13} that in this case $T^{(3)} T^{(2)} \nsubseteq T^{(4)}$ and moreover, $T^{(3)} \bigl( T^{(2)}\bigr) ^{\ell} \nsubseteq T^{(4)}$ for all $\ell \ge 1$.

3. In 1978  Volichenko proved Theorem \ref{BJ} for $m=3$ and arbitrary $n$ in the preprint \cite{Volichenko78} written in Russian;  in 2007 Gordienko \cite{Gordienko07} independently proved this theorem for $m=3, n=2$. These results were unknown to the authors of \cite{BJ10,EKM09}. Recently another proof of Theorem \ref{BJ} has been published in \cite{GrishinPchel15}.

4. In \cite{EKM09} a pair $(m, n)$ of positive integers was called \textit{null} if for each algebra $A$ (over a field $F$ of characteristic 0) $T^{(m)} (A) \ T^{(n)} (A) \subset T^{(m+n-1)} (A)$ where $T^{( \ell )} (A)$ is the two-sided ideal in $A$ generated by all commutators $[a_1, \dots , a_{\ell}]$ $(a_i \in A)$. The original conjecture stated in \cite[Conjecture 3.6]{EKM09} was as follows: A pair $(m,n)$ is null if and only if $m$ or $n$ is odd. This conjecture is equivalent to Conjecture \ref{conject} above; this can be checked using the same argument that is used to deduce Theorem \ref{maintheorem1} from Theorem \ref{maintheorem2}.

\section{Proofs of Theorems \ref{maintheorem1} and \ref{maintheorem2} }

First we prove some auxiliary results.

Let $G$ and $H$ be unital associative algebras over a field $F$ such that $[g_1, g_2, g_3] = 0$, $[h_1, h_2, h_3] =0$ for all $g_i \in G$, $h_j \in H$. Note that each commutator $[g_1, g_2]$ $(g_i \in G)$ is central in $G$, that is, $[g_1, g_2] g = g [g_1, g_2]$ for each $g \in G$. Similarly, each commutator $[h_1,h_2]$ $(h_j \in H)$ is central in $H$.

\begin{lemma}
\label{cl}
Let
\[
c_{\ell} = [ g_1 \otimes h_1, g_2 \otimes h_2, \dots , g_{\ell} \otimes h_{\ell}] 
\]
where $\ell \ge 2, g_i \in G, h_j \in H$. Then
\begin{align*}
c_2 = & \ [g_1,g_2] \otimes h_1 h_2 + g_2 g_1 \otimes [h_1, h_2],
\\
c_{2k} = & \ [g_1,g_2] [g_3,g_4] \dots [g_{2k-1},g_{2k}] \otimes [h_1 h_2, h_3] [h_4,h_5] \dots [h_{2k-2}, h_{2k-1}] h_{2k} 
\\
+ & \ [g_2 g_1, g_3] [g_4,g_5] \dots [g_{2k-2}, g_{2k-1}] g_{2k} \otimes [h_1, h_2] [h_3,h_4] \dots [h_{2k-1}, h_{2k}] \qquad (k >1),
\\
c_{2k+1} = & \ [g_1, g_2] [g_3,g_4]  \dots [g_{2k-1}, g_{2k}] g_{2k+1} \otimes [h_1 h_2, h_3][h_4,h_5] \dots [h_{2k}, h_{2k+1}]
\\
+ & \ [g_2 g_1, g_3] [g_4, g_5] \dots [g_{2k}, g_{2k+1}] \otimes [h_1, h_2] [h_3,h_4] \dots [h_{2k-1}, h_{2k}] h_{2k+1} \qquad (k \ge 1).
\end{align*}
\end{lemma}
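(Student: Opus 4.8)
The plan is to argue by induction on $\ell$, relying on just two facts about the algebra $G \otimes H$. The first is the multiplication rule $(g \otimes h)(g' \otimes h') = g g' \otimes h h'$. The second is the hypothesis that every two-fold commutator $[g, g']$ is central in $G$ and every $[h, h']$ is central in $H$; consequently any product of such commutators is central, and any such product commutes past an arbitrary factor in its own tensor slot. The computational core is the following pair of ``transfer'' identities, valid whenever $P$ is central in $G$, $Q$ is central in $H$, and $g', h', g, h$ are arbitrary:
\begin{align*}
[P \otimes Q h', \, g \otimes h] &= P g \otimes Q [h', h], \\
[P g' \otimes Q, \, g \otimes h] &= P [g', g] \otimes Q h .
\end{align*}
Each follows at once from the multiplication rule after moving $P$ past $g$ and $Q$ past $h$; for example, in the first identity $Pg \otimes Q h' h - gP \otimes h Q h' = Pg \otimes Q(h'h - hh')$.

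First I would dispose of the base case $c_2$ by directly expanding $[g_1 \otimes h_1, g_2 \otimes h_2] = g_1 g_2 \otimes h_1 h_2 - g_2 g_1 \otimes h_2 h_1$ and regrouping it as $[g_1, g_2] \otimes h_1 h_2 + g_2 g_1 \otimes [h_1, h_2]$. For the inductive step I would observe that every summand of $c_\ell$, as written in the statement, has one tensor factor equal to a product of commutators (hence central) and the other factor equal to such a central product times a single ``free'' element; that is, each summand has the shape $P \otimes Q h'$ or $P g' \otimes Q$ appearing on the left of the transfer identities. Applying the matching identity with $g = g_{\ell + 1}$, $h = h_{\ell + 1}$ to each of the two summands of $c_\ell$ then yields exactly the two summands of $c_{\ell + 1} = [c_\ell, g_{\ell + 1} \otimes h_{\ell + 1}]$. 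The one structural feature to keep track of is that the ``free'' element migrates between the two tensor slots as $\ell$ increases by $1$, which is precisely why the even and odd formulas have different shapes; so I would run the two half-steps $c_{2k} \to c_{2k+1}$ and $c_{2k+1} \to c_{2(k+1)}$ in turn.

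The passage $c_2 \to c_3$ is mildly exceptional and I would check it separately: here the free element is the two-generator product $h_1 h_2$ (respectively $g_2 g_1$), so the first transfer step turns it into the commutator $[h_1 h_2, h_3]$ (respectively $[g_2 g_1, g_3]$), which then remains as a fixed central factor in all later $c_\ell$ and accounts for the asymmetric leading entries in the stated formulas. After this initial step every free element is a single generator and the induction proceeds uniformly. I expect the main obstacle to be bookkeeping rather than genuine algebra: one must match indices carefully and confirm that no extraneous sign appears. In fact the transfer identities are oriented so that $[h', h]$ and $[g', g]$ come out with exactly the orientation demanded by the target formulas, so the signs cause no difficulty, and no obstacle beyond the centrality of two-fold commutators arises.
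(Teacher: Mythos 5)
Your proposal is correct and takes essentially the same approach as the paper: induction on $\ell$ starting from the expansion of $c_2$, with the inductive step carried out by exactly the two centrality-based ``transfer'' computations $[P \otimes Q h', \, g \otimes h] = Pg \otimes Q[h',h]$ and $[Pg' \otimes Q, \, g \otimes h] = P[g',g] \otimes Qh$ that the paper performs inline when passing from $c_{2k}$ to $c_{2k+1}$ and from $c_{2k+1}$ to $c_{2k+2}$. Your explicit separate treatment of the step $c_2 \to c_3$, where the free element is the product $h_1h_2$ (resp.\ $g_2g_1$) rather than a single entry, is a detail the paper leaves implicit in its $k=1$ case, but the argument is the same.
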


\begin{proof}
Induction on the length $\ell$ of the commutator $c_{\ell}$. If $\ell = 2$ then
\begin{align*}
c_2 = & \ [g_1 \otimes h_1, g_2 \otimes h_2] = g_1 g_2 \otimes h_1 h_2 - g_2 g_1 \otimes h_2 h_1
\\
= & \ g_1 g_2 \otimes h_1 h_2 - g_2 g_1 \otimes h_1 h_2 + g_2 g_1 \otimes h_1 h_2 - g_2 g_1 \otimes h_2 h_1
\\
= & \ [g_1, g_2] \otimes h_1 h_2 + g_2 g_1 \otimes [h_1, h_2].
\end{align*}

Let $\ell >2$; suppose that for each $\ell ' < \ell$ the lemma has already been proved.

Let $\ell = 2k+1$ $(k \ge 1)$. By the induction hypothesis, we have
\begin{align*}
c_{2k+1} = & \ [c_{2k}, g_{2k+1} \otimes h_{2k+1}]
\\
= & \ \bigl[ [g_1,g_2] \dots [g_{2k-1},g_{2k}]  \otimes [h_1 h_2, h_3] [h_4,h_5] \dots [h_{2k-2}, h_{2k-1}] h_{2k} , g_{2k+1} \otimes h_{2k+1} \bigr]
\\
+ & \ \bigl[ [g_2 g_1, g_3] [g_4,g_5] \dots [g_{2k-2}, g_{2k-1}] g_{2k} \otimes [h_1, h_2] \dots [h_{2k-1}, h_{2k}] , g_{2k+1} \otimes h_{2k+1} \bigr] .
\end{align*}
Note that the products $[g_1,g_2] \dots [g_{2k-1},g_{2k}]$ and $[h_1 h_2, h_3] [h_4,h_5] \dots [h_{2k-2}, h_{2k-1}]$  are central in $G$ and $H$, respectively, so
\begin{align*}
& \ \bigl[ [g_1,g_2] \dots [g_{2k-1},g_{2k}]  \otimes [h_1 h_2, h_3] [h_4,h_5] \dots [h_{2k-2}, h_{2k-1}] h_{2k} , g_{2k+1} \otimes h_{2k+1} \bigr]
\\
= & \ [g_1,g_2] \dots [g_{2k-1},g_{2k}] g_{2k+1} \otimes [h_1 h_2, h_3] [h_4,h_5] \dots [h_{2k-2}, h_{2k-1}] h_{2k} h_{2k+1}
\\
- & \ g_{2k+1}  [g_1,g_2] \dots [g_{2k-1},g_{2k}] \otimes h_{2k+1} [h_1 h_2, h_3] [h_4,h_5] \dots [h_{2k-2}, h_{2k-1}] h_{2k} 
\\
= & \ [g_1,g_2] \dots [g_{2k-1},g_{2k}] g_{2k+1} \otimes [h_1 h_2, h_3] [h_4,h_5] \dots [h_{2k-2}, h_{2k-1}] h_{2k} h_{2k+1}
\\
- & \ [g_1,g_2] \dots [g_{2k-1},g_{2k}] g_{2k+1} \otimes [h_1 h_2, h_3] [h_4,h_5] \dots [h_{2k-2}, h_{2k-1}]  h_{2k+1} h_{2k}
\\
= & \ [g_1,g_2] \dots [g_{2k-1},g_{2k}] g_{2k+1} \otimes [h_1 h_2, h_3] [h_4,h_5] \dots [h_{2k-2}, h_{2k-1}] [h_{2k}, h_{2k+1}].
\end{align*}
Similarly,
\begin{align*}
& \bigl[ [g_2 g_1, g_3] [g_4,g_5] \dots [g_{2k-2}, g_{2k-1}] g_{2k} \otimes [h_1, h_2] \dots [h_{2k-1}, h_{2k}] , g_{2k+1} \otimes h_{2k+1} \bigr] 
\\
= & \ [g_2 g_1, g_3] [g_4,g_5] \dots [g_{2k-2}, g_{2k-1}] [g_{2k}, g_{2k+1}] \otimes [h_1, h_2] \dots [h_{2k-1}, h_{2k}] h_{2k+1}
\end{align*}
so 
\begin{align*}
c_{2k+1} = & \ [g_1, g_2] \dots [g_{2k-1}, g_{2k}] g_{2k+1} \otimes [h_1 h_2, h_3][h_4,h_5] \dots [h_{2k}, h_{2k+1}]
\\
+ & \ [g_2 g_1, g_3] [g_4, g_5] \dots [g_{2k}, g_{2k+1}] \otimes [h_1, h_2] \dots [h_{2k-1}, h_{2k}] h_{2k+1} ,
\end{align*}
as required.

Let $\ell = 2k$ $(k > 1)$. By the induction hypothesis, we have
\begin{align*}
c_{2k} = & \ [c_{2k-1}, g_{2k} \otimes h_{2k}]
\\
= & \ \bigl[ [g_1, g_2] \dots [g_{2k-3}, g_{2k-2}] g_{2k-1} \otimes [h_1 h_2, h_3][h_4,h_5] \dots [h_{2k-2}, h_{2k-1}], g_{2k} \otimes h_{2k} \bigr]
\\
+ & \ \bigl[ [g_2 g_1, g_3] [g_4, g_5] \dots [g_{2k-2}, g_{2k-1}] \otimes [h_1, h_2] \dots [h_{2k-3}, h_{2k-2}] h_{2k-1} , g_{2k} \otimes h_{2k} \bigr] 
\\
= & \ [g_1, g_2] \dots [g_{2k-3}, g_{2k-2}] [g_{2k-1}, g_{2k}] \otimes [h_1 h_2, h_3][h_4,h_5] \dots [h_{2k-2}, h_{2k-1}] h_{2k}
\\
+ & \ [g_2 g_1, g_3] [g_4, g_5] \dots [g_{2k-2}, g_{2k-1}] g_{2k} \otimes [h_1, h_2] \dots [h_{2k-3}, h_{2k-2}] [h_{2k-1} , h_{2k}],
\end{align*}
as required.

This completes the proof of Lemma \ref{cl}.
\end{proof}

\begin{corollary}
\label{nilp}
Suppose that 
\begin{equation}
\label{prod}
[f_1, f_2] \dots [f_{2k-1}, f_{2k}] = 0  \qquad \mbox{for all} \ \ f_j \in H.
\end{equation}
Then for all $u_i \in G \otimes H$ we have
\[
[u_1, u_2, \dots , u_{2k+1}] = 0.
\]
\end{corollary}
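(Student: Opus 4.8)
The plan is to reduce the statement to the case of simple tensors and then read off the result from the formula for $c_{2k+1}$ in Lemma \ref{cl}. First I would use that the left-normed commutator $[u_1, u_2, \dots, u_{2k+1}]$ is multilinear in each of its arguments. Since every element of $G \otimes H$ is a finite $F$-linear combination of simple tensors $g \otimes h$ with $g \in G$, $h \in H$, multilinearity reduces the claim to showing that $c_{2k+1} = [g_1 \otimes h_1, \dots, g_{2k+1} \otimes h_{2k+1}] = 0$ for all $g_i \in G$ and $h_j \in H$.

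Next I would invoke Lemma \ref{cl}, which expresses $c_{2k+1}$ as a sum of two tensors. The key observation is that in each summand the right-hand (i.e.\ the $H$-) factor is a product of $k$ commutators of elements of $H$, up to an extra factor from $H$. In the first summand this factor is
\[
[h_1 h_2, h_3][h_4, h_5] \dots [h_{2k}, h_{2k+1}],
\]
which is a product of exactly $k$ commutators: the first of them is the commutator $[h_1 h_2, h_3]$ of the two elements $h_1 h_2 \in H$ and $h_3 \in H$, and the remaining $k-1$ are $[h_4, h_5], \dots, [h_{2k}, h_{2k+1}]$. Hence by hypothesis (\ref{prod}) this factor vanishes. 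In the second summand the $H$-factor is
\[
[h_1, h_2][h_3, h_4] \dots [h_{2k-1}, h_{2k}] \, h_{2k+1},
\]
and here the leading product $[h_1, h_2][h_3, h_4] \dots [h_{2k-1}, h_{2k}]$ is again a product of $k$ commutators, so it is zero by (\ref{prod}); multiplying on the right by $h_{2k+1}$ keeps it zero.

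Since both $H$-factors vanish, both summands vanish and therefore $c_{2k+1} = 0$. Combined with the multilinearity reduction, this gives $[u_1, \dots, u_{2k+1}] = 0$ for all $u_i \in G \otimes H$. I do not expect a genuine obstacle here: Lemma \ref{cl} has already carried out the structural computation, so the only things to get right are the bookkeeping that each $H$-factor contains precisely $k$ commutators matching the hypothesis, and the remark that $h_1 h_2$ is again an element of $H$, so that $[h_1 h_2, h_3]$ legitimately counts as one of the commutators appearing in (\ref{prod}).
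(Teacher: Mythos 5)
Your proposal is correct and follows essentially the same route as the paper: reduce to simple tensors by multilinearity, then apply Lemma \ref{cl} together with hypothesis (\ref{prod}). The paper simply states that the vanishing of $c_{2k+1}$ ``follows from (\ref{prod}) and Lemma \ref{cl}''; your write-up fills in exactly the bookkeeping the paper leaves implicit, namely that each $H$-factor is a product of precisely $k$ commutators of elements of $H$ (with $[h_1h_2,h_3]$ counting as one such commutator since $h_1h_2 \in H$).
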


\begin{proof}
Since each $u_i \in G \otimes H$ is a sum of products of the form $g \otimes h$ ($g \in G$, $h \in H$), the commutator $[u_1, u_2, \dots , u_{2k+1}]$ is a sum of commutators of the form $[ g_1 \otimes h_1, g_2 \otimes h_2, \dots , g_{2k+1} \otimes h_{2k+1}]$. On the other hand, it follows from (\ref{prod}) and  Lemma \ref{cl} that $[ g_1 \otimes h_1, g_2 \otimes h_2, \dots , g_{2k+1} \otimes h_{2k+1}] = 0$ for all $g_i \in G$, $h_j \in H$. Thus, $[u_1, u_2, \dots , u_{2k+1}] = 0$ for all $u_i \in G \otimes H$, as required.
\end{proof}

\begin{corollary}
\label{nilp2}
Let $v_1 = g_1 \otimes 1$, $v_i = g_i \otimes h_i$ $(i = 2, \dots , 2m'-1)$, $v_{2m'}=g_{2m'} \otimes 1$, $w_1 = g_1' \otimes 1$, $w_j = g_j' \otimes h_j'$ $(j = 2, \dots , 2n'-1)$, $w_{2n'}=g_{2n'}' \otimes 1$ where $g_i,g_i' \in G$, $h_j, h_j' \in H$. Then
\begin{align*}
[v_1, \dots , v_{2m'}] [w_1, \dots , w_{2n'}] =  & \ [g_1,g_2] \dots [g_{2m'-1}, g_{2m'}] [g_1', g_2'] \dots [g_{2n'-1}', g_{2n'}']   
\\
\otimes  & \ [h_2, h_3] \dots [h_{2m'-2},h_{2m'-1}] [h_2',h_3'] \dots [h_{2n'-2}',h_{2n'-1}'] .
\end{align*}
\end{corollary}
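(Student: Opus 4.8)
The plan is to apply Lemma~\ref{cl} directly to each of the two commutators and then multiply the resulting simple tensors. First I would regard $[v_1,\dots,v_{2m'}]$ as the commutator $c_{2m'}$ of Lemma~\ref{cl} with $k=m'$, but with the specific inputs dictated by the hypothesis: since $v_1 = g_1 \otimes 1$ and $v_{2m'} = g_{2m'}\otimes 1$, I substitute $h_1 = 1$ and $h_{2m'} = 1$ into the formula for $c_{2k}$. The key observation is that the second summand in that formula carries the $H$-factor $[h_1,h_2][h_3,h_4]\cdots[h_{2k-1},h_{2k}]$, which begins with $[h_1,h_2] = [1,h_2] = 0$; hence the entire second summand vanishes. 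In the surviving first summand the $H$-factor is $[h_1 h_2,h_3][h_4,h_5]\cdots[h_{2k-2},h_{2k-1}]h_{2k}$, and the substitutions collapse $[h_1 h_2,h_3]$ to $[h_2,h_3]$ and drop the trailing factor $h_{2k}=1$. This leaves
\[
[v_1,\dots,v_{2m'}] = [g_1,g_2]\cdots[g_{2m'-1},g_{2m'}] \otimes [h_2,h_3]\cdots[h_{2m'-2},h_{2m'-1}].
\]

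I would perform the identical computation for $[w_1,\dots,w_{2n'}]$, obtaining a single simple tensor with $G$-factor $[g_1',g_2']\cdots[g_{2n'-1}',g_{2n'}']$ and $H$-factor $[h_2',h_3']\cdots[h_{2n'-2}',h_{2n'-1}']$. Finally, multiplying the two simple tensors by the rule $(a\otimes b)(c\otimes d) = ac \otimes bd$ in $G\otimes H$ yields exactly the asserted formula.

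There is essentially no deep obstacle here; the content of the argument is the bookkeeping of which summand of Lemma~\ref{cl} survives the substitution $h_1 = h_{2m'} = 1$. The one point that requires a little care is the boundary case $m'=1$ (and likewise $n'=1$): then $[v_1,v_2]$ must be read off from the $c_2$ line of Lemma~\ref{cl} rather than the $c_{2k}$ line, and one checks that setting $h_1=h_2=1$ gives $c_2 = [g_1,g_2]\otimes 1$, which agrees with the general formula under the convention that the empty product $[h_2,h_3]\cdots[h_{2m'-2},h_{2m'-1}]$ equals $1$. With that convention the argument is uniform in $m'$ and $n'$.
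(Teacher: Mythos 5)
Your proof is correct and follows essentially the same route as the paper: both apply Lemma~\ref{cl} to each commutator (the substitution $h_1 = h_{2m'} = 1$ kills the second summand via $[1,h_2]=0$ and simplifies the first) and then multiply the resulting simple tensors. Your explicit check of the boundary case $m'=1$ (or $n'=1$) via the $c_2$ formula is a detail the paper leaves implicit, but it does not constitute a different argument.
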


\begin{proof}
By Lemma \ref{cl}, we have
\begin{align*}
[v_1, \dots , v_{2m'}] = & \ [g_1, g_2]  \dots [g_{2m'-1}, g_{2m'}] \otimes [h_2, h_3] \dots [h_{2m'-2}, h_{2m'-1}] ,
\\
[w_1, \dots , w_{2n'}] = & \ [g_1', g_2']  \dots [g_{2n'-1}', g_{2n'}']  \otimes [h_2', h_3'] \dots [h_{2n'-2}', h_{2n'-1}'].
\end{align*}
The result follows.
\end{proof}

\begin{proof}[Proof of Theorem \ref{maintheorem2}] Two cases are to be considered: the case when $char \  F \ne 2$ and the case when $char \ F =2$.

Case 1. Suppose that $F$ is a field of characteristic $\ne 2$. Let $E$ be the unital infinite-dimensional Grassmann (or exterior) algebra over $F$. Then $E$ is generated by the elements $e_i$ $(i = 1, 2, \dots )$ such that $e_i e_j = - e_j e_i$, $e_i^2 = 0$ for all $i, j$ and the set
\[
\mathcal B = \{ e_{i_1} e_{i_2} \dots e_{i_k} \mid k \ge 0, \, i_1 < i_2 < \dots < i_k \}
\]
forms a basis of $E$ over $F$. 

It is well known that $[g_1,g_2,g_3]=0$ for all $g_i \in E$. Indeed, we may assume without loss of generality that $g_{\ell} \in \mathcal B$ $(\ell = 1,2,3)$. Let $g_{\ell} = e_{i_{\ell 1}} \dots e_{i_{\ell k(\ell )}}$ $(\ell = 1,2,3)$. Note that if $k =  2 k'$ is even then the product $e_{i_1} e_{i_2} \dots e_{i_{k}}$ is central in $E$ because it commutes with all generators $e_i$. Hence, if  $k(1)$ or $k(2)$ is even then $[g_1,g_2 ]=0$ and, therefore, $[g_1,g_2,g_3]=0$. On the other hand, if both $k(1)$ and $k(2)$ are odd then the commutator $[g_1,g_2] = 2 g_1 g_2 = 2 e_{i_{1 1}} \dots e_{i_{1 k(1 )}} e_{i_{2 1}} \dots e_{i_{2 k(2 )}}$ is central in $E$ and again $[g_1,g_2,g_3]=0$, as claimed.

Recall that the $r$-generated unital Grassmann algebra $E_r$ is the unital subalgebra of $E$ generated by $e_1, e_2, \dots , e_r$. Note that $[h_1,h_2,h_3]=0$ for all $h_j \in E_r$. 

Take $A=E \otimes E_r$ where $r = m+n-4= 2(m'+n'-2)$. We can apply Lemma \ref{cl} and Corollaries \ref{nilp} and \ref{nilp2} for $G=E$, $H=E_r$.

Let $k = m' +n' -1$. Note that $2k >r$. It follows that $[f_1,f_2] \dots [f_{2k-1},f_{2k}]=0$ for all $f_i \in E_r$. Indeed, for all $f, f' \in E_r$ the commutator $[f,f']$ belongs to the linear span of the set $\{ e_{i_1}  \dots e_{i_{2 \ell}} \mid \ell \ge 1, 1 \le i_s \le r \}$.  Hence, $[f_1,f_2] \dots [f_{2k-1},f_{2k}]$ belongs to the linear span of the set $\{ e_{i_1}  \dots e_{i_{2 \ell}} \mid \ell \ge k, 1 \le i_s \le r \}$. Since $2 \ell \ge 2k > r$, each product $e_{i_1}  \dots e_{i_{2 \ell}}$ above contains equal terms $e_{i_{s}} = e_{i_{s'}}$ $(s < s')$ and, therefore, is equal to $0$. Thus, $[f_1,f_2] \dots [f_{2k-1},f_{2k}]=0$, as claimed .

Now, by Corollary \ref{nilp}, we have $[u_1, \dots , u_{2k+1}]=0$ for all $u_i \in E \otimes E_r$ , that is,
\[
[u_1, \dots ,u_{m+n-1}] =0
\]
for all $u_1, \dots , u_{m+n-1}  \in A$, as required. 

Further, take  $v_1 = e_1 \otimes 1$, $v_i = e_i \otimes e_{i-1}$ $(i = 2, \dots , 2m'-1)$, $v_{2m'}=e_{2m'} \otimes 1$, $w_1 = e_{2m'+1} \otimes 1$, $w_j = e_{2m'+j} \otimes e_{2m'+j-3}$ $(j = 2, \dots , 2n'-1)$, $w_{2n'}=e_{2m'+2n'} \otimes 1$. Note that if $i \ne j$ then $[e_i,e_j] = 2e_i e_j$. By Corollary \ref{nilp2}, we have
\begin{align*}
[v_1, \dots , v_{2m'}] [w_1, \dots , w_{2n'}] =  & \ [e_1,e_2] \dots [e_{2m'-1}, e_{2m'}] [e_{2m'+1}, e_{2m'+2}] \dots [e_{2m'+2n'-1}, e_{2m'+2n'}]   
\\
\otimes  & \ [e_1, e_2] \dots [e_{2m'-3},e_{2m'-2}] [e_{2m'-1},e_{2m'}] \dots [e_{2m'+2n'-5},e_{2m'+2n'-4}] 
\\
=  & \ 2^{m'+n'} e_1 e_2 \dots e_{2m'-1} e_{2m'} e_{2m'+1} e_{2m'+2} \dots e_{2m'+2n'-1} e_{2m'+2n'}   
\\
\otimes  & \ 2^{m'+n'-2} e_1 e_2 \dots e_{2m'-3} e_{2m'-2} e_{2m'-1} e_{2m'} \dots e_{2m'+2n'-5} e_{2m'+2n'-4} 
\\
= & \ 2^{m+n-2} e_1 e_2 \dots e_{m+n} \otimes e_1 e_2 \dots e_{m+n-4} \ne 0,
\end{align*}
as required.


Case 2. Suppose that $F$ is a field of characteristic $2$. Let $\mathcal G$ be the group given by the presentation
\[
\mathcal G = \langle y_1, y_2, \dots \mid y_i^2 =1, \ \big( (y_i, y_j), y_k \big) = 1 \ (i,j,k = 1,2, \dots ) \rangle
\]
where $(a,b)=a^{-1}b^{-1} a b$. Then $\mathcal G$ is a nilpotent group of class $2$ so $(a,b) c = c(a,b)$ and $(a,bc) = (a,b) (a,c)$ for all $a,b,c \in \mathcal G$. The quotient group $\mathcal G / \mathcal G'$ is an elementary abelian $2$-group so $a^2 \in \mathcal G'$ for all $a \in \mathcal G$. Hence, $(a,b)^2 = (a^2,b) =1$ and $(a,b) = (a,b)^{-1} = (b,a)$ for all $a,b \in \mathcal G$.  

Let $(<)$ be an arbitrary linear order on the set $\{ (i,j) \mid i,j \in \mathbb Z, \ 0< i <j \}$. The following lemma is well known and easy to check.

\begin{lemma}
\label{groupG}
Let $a \in \mathcal G$. Then $a$ can be written in a unique way  in the form
\begin{gather}
\label{formG}
a = y_{i_1} \dots y_{i_q} (y_{j_1}, y_{j_2}) \dots (y_{j_{2q'-1}}, y_{j_{2q'}}) 
\\
 \mbox{ where }\ q,q' \ge 0;  \quad i_1< \dots < i_q, \quad j_{2s-1}< j_{2s} \mbox{ for all }  s, \quad (j_{2s-1}, j_{2s}) <  (j_{2s'-1}, j_{2s'}) \mbox{ if } \ s < s' . \nonumber
\end{gather}
\end{lemma}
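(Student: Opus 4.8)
The plan is to establish the normal form \eqref{formG} for elements of $\mathcal G$ by exhibiting both a \emph{spanning} argument (every element \emph{can} be written in the stated form) and a \emph{uniqueness} argument (the form is \emph{unique}). The group is nilpotent of class $2$, so $\mathcal G'$ is central and is generated by the commutators $(y_i,y_j)$. First I would use the collection relations already recorded before the lemma: since $y_i^2=1$, $(a,b)^2=1$, $(a,b)=(b,a)$, and $(a,bc)=(a,b)(a,c)$ (and symmetrically in the first argument), every product of generators can be rearranged. Starting from an arbitrary word in the $y_i$, I would collect the generators to the front in increasing order of index, each transposition $y_j y_i \mapsto y_i y_j (y_i,y_j)^{\pm 1}=y_i y_j (y_i,y_j)$ producing a central commutator factor; repeated letters $y_i y_i$ cancel because $y_i^2=1$. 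This shows the $y$-part can be brought to the shape $y_{i_1}\dots y_{i_q}$ with $i_1<\dots<i_q$ distinct, while the accumulated commutators, all central, can be gathered into a product over $\mathcal G'$.

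For the commutator part I would use that $\mathcal G'$ is an elementary abelian $2$-group (being central, of exponent $2$, and generated by the $(y_i,y_j)$). Each generator satisfies $(y_i,y_j)=(y_j,y_i)$, so I may always assume $i<j$; the ordering $(<)$ on index-pairs lets me list the chosen commutators as $(y_{j_1},y_{j_2})\dots(y_{j_{2q'-1}},y_{j_{2q'}})$ with $j_{2s-1}<j_{2s}$ and the pairs strictly increasing in $(<)$. Since the group is abelian of exponent $2$ here, any two distinct commutators with distinct pairs multiply freely and repeats cancel, which accounts for the ``$\le$ becomes $<$'' and the vanishing of squares. This completes the spanning claim: every $a\in\mathcal G$ equals an expression of the form \eqref{formG}.

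For uniqueness, the cleanest route is to build a concrete model and a homomorphism onto it in which the normal-form data are read off coordinatewise. I would take the abelianization $\mathcal G/\mathcal G'$, which is the elementary abelian $2$-group on the images of the $y_i$, to detect the multiset $\{i_1,\dots,i_q\}$ (hence, with the increasing-order convention, the tuple itself): the image of \eqref{formG} in $\mathcal G/\mathcal G'$ is $\bar y_{i_1}\dots\bar y_{i_q}$, and these monomials are linearly independent over $\mathbb F_2$, so $q$ and the $i_s$ are determined. Having fixed the $y$-part, the residual element lies in $\mathcal G'$; since $\mathcal G'$ is the free elementary abelian $2$-group on the symbols $(y_i,y_j)$ with $i<j$ (which is exactly what the presentation forces, there being no further relations among the commutators), the chosen pairs $(j_{2s-1},j_{2s})$ and their number $q'$ are likewise determined. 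Thus both the $y$-part and the commutator-part of \eqref{formG} are unique.

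The main obstacle, and the only point needing genuine care, is justifying that $\mathcal G'$ is \emph{free} elementary abelian on the $(y_i,y_j)$ $(i<j)$ — i.e.\ that the defining relations impose no hidden dependencies among the commutators. I would address this by constructing an explicit group of the desired shape (for instance, on the $\mathbb F_2$-vector space with basis the $y_i$ and the formal symbols $(y_i,y_j)$, with multiplication defined by the collection rule) and checking it satisfies all the defining relations; the universal property of the presentation then yields a surjection from $\mathcal G$ onto this model, and since the model plainly has elements in bijection with normal forms \eqref{formG}, the surjection is a bijection on normal forms, giving both existence and uniqueness simultaneously. This sidesteps any delicate word-problem argument and makes the ``easy to check'' assertion precise.
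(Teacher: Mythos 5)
The paper offers no proof of this lemma at all: it is introduced with the sentence ``The following lemma is well known and easy to check,'' so there is nothing in the text to compare your argument against line by line. Judged on its own merits, your proposal is correct and supplies exactly what the paper leaves implicit. The existence half (collection of the $y_i$ to the front using centrality of $\mathcal G'$, cancellation via $y_i^2=1$, symmetry $(y_i,y_j)=(y_j,y_i)$, and exponent $2$ in $\mathcal G'$) is the standard argument and is carried out correctly. More importantly, you put your finger on the one genuine issue --- that uniqueness is equivalent to $\mathcal G'$ being \emph{free} elementary abelian on the symbols $(y_i,y_j)$, $i<j$, which the presentation does not give for free --- and you resolve it the right way, by building a concrete model group on the set of normal forms (equivalently, a central extension of $\bigoplus_i \mathbb{F}_2$ by $\bigoplus_{i<j}\mathbb{F}_2$ with multiplication twisted by the collection rule) and invoking the universal property of the presentation to get a surjection $\mathcal G \to M$ that separates normal forms; this is the classical van der Waerden trick. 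The only step you gloss over is that the ``multiplication defined by the collection rule'' actually yields a group: associativity must be checked, and it holds because the correction term $\beta(u,u')=\sum_{i<j}u'_iu_j\,e_{ij}$ is biadditive over $\mathbb{F}_2$ and hence satisfies the $2$-cocycle identity; one should also note that with this convention an increasing product $y_{i_1}\dots y_{i_q}$ picks up no correction term, so distinct normal forms visibly have distinct images. With that routine verification written out, your proof is complete, and it also subsumes your separate abelianization step for the $y$-part, which the model detects in its first coordinate anyway.
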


Let $F \mathcal G$ be the group algebra of $\mathcal G$ over $F$. Let $d_{ij} = (y_i,y_j) + 1 \in F \mathcal G$. Note that $d_{ij} = d_{ji}$ and $d_{ii} = 0$ for all $i,j$.

Let $I$ be the two-sided ideal of $F \mathcal G$ generated by the set
\[
S = \{ d_{i_1 i_2}  d_{i_3 i_4} + d_{i_1 i_3} d_{i_2 i_4} \mid i_1, i_2, i_3, i_4 = 1,2 \dots \} .
\]
Note that $d_{j_1 j_3} d_{j_2 j_3}  \in I$ for all $j_1,j_2,j_3$ because $d_{j_1 j_3} d_{j_2 j_3} = d_{j_1 j_3} d_{j_2 j_3} + d_{j_1 j_2} d_{j_3 j_3}  \in S$.  Since $d_{i j} = d_{j i}$ for all $i,j$, we have $ d_{i_1 i_2} d_{i_3 i_4}  \in I$ if any two of the indices $i_1, i_2, i_3, i_4$ coincide. It follows that
\begin{equation}
\label{prodsum}
\prod_s (y_{j}, y_{i_s}) +1 = \prod_s (d_{j i_s} + 1)  +1  = \Big( \prod_s d_{j i_s} + \dots + \sum_{s<s'} d_{j i_s} d_{j i_{s'}}+ \sum_s d_{j i_s} + 1 \Big) + 1 \equiv \sum_s d_{j i_s}  \pmod{I}.
\end{equation}

The following two lemmas are well known (see, for instance, \cite[Lemma 2.1]{GK95}, \cite[Example 3.8]{GL83}).

\begin{lemma}
\label{nilp22}
For all $u_1, u_2, u_3 \in F \mathcal G$, we have $[u_1, u_2, u_3] \in I$. 
\end{lemma}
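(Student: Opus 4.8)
The plan is to reduce the statement to a single computation about products of two explicit "linear" elements in the $d_{ij}$, and then to annihilate everything modulo $I$ by two clean moves: discarding products of $d$'s having a repeated index, and cancelling the remaining terms in pairs using $\operatorname{char}F=2$. First I would use that $[u_1,u_2,u_3]$ is trilinear to reduce to $u_1=a$, $u_2=b$, $u_3=c\in\mathcal G$. Writing each group element in the normal form of Lemma \ref{groupG} as a square-free product of generators times a central element of $\mathcal G'$, and pulling the central factors (which commute with everything and are invertible) out of every commutator, I may assume $a=\prod_{i\in\alpha}y_i$, $b=\prod_{j\in\beta}y_j$, $c=\prod_{k\in\gamma}y_k$ for finite index sets $\alpha,\beta,\gamma$. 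Since every $(g,h)$ is central, the characteristic $2$ identity $ba=ab\,(a,b)$ gives $[a,b]=ab\,(1+(a,b))$ and hence
\[
[a,b,c]=abc\,(1+(a,b))\,(1+(ab,c)).
\]
As $abc$ is a unit, it suffices to prove $(1+(a,b))(1+(ab,c))\in I$.

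The crucial step is the congruence
\[
1+(g,h)\equiv \sum_{i\in\operatorname{supp}g,\ j\in\operatorname{supp}h} d_{ij}\pmod I
\]
for square-free $g,h$. By bilinearity of the group commutator, $(g,h)=\prod_{i,j}(y_i,y_j)=\prod_{i,j}(1+d_{ij})$, so $1+(g,h)$ is the sum over all nonempty sets $S$ of pairs of the products $\prod_{(i,j)\in S}d_{ij}$. Any such product in which some index occurs twice lies in $I$ (as was noted before the lemma), so only the partial matchings survive. For a matching between a fixed $r$-element subset of $\operatorname{supp}g$ and a fixed $r$-element subset of $\operatorname{supp}h$ there are $r!$ choices, and all of them are congruent modulo $I$: the relation $d_{i_1i_2}d_{i_3i_4}\equiv d_{i_1i_3}d_{i_2i_4}$ coming from $S$ is precisely a two-edge swap, and such swaps act transitively on perfect matchings. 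Hence for $r\ge 2$ each such pair of vertex sets contributes $r!\equiv 0\pmod 2$ copies of a single class and cancels, leaving only the $r=1$ (linear) part, which is the asserted sum.

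Applying this to both factors and abbreviating $D(P,Q)=\sum_{p\in P,\,q\in Q}d_{pq}$, I obtain $(1+(a,b))(1+(ab,c))\equiv D(\alpha,\beta)\,D(\alpha\triangle\beta,\gamma)\pmod I$, where $\alpha\triangle\beta=\operatorname{supp}(ab)$ (the central part of $ab$ drops out of $(ab,c)$). In characteristic $2$ one has $D(\alpha\triangle\beta,\gamma)=D(\alpha,\gamma)+D(\beta,\gamma)$, so it remains to check $D(\alpha,\beta)D(\alpha,\gamma)\equiv 0$ and $D(\alpha,\beta)D(\beta,\gamma)\equiv 0$. Each is a sum of terms $d_{ij}d_{i'k}$ (resp.\ $d_{ij}d_{j'k}$); a term with a repeated index lies in $I$, while a term on four distinct indices depends, modulo $I$, only on its four-element index set. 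The fixed-point-free involution swapping the two $\alpha$-indices $i\leftrightarrow i'$ (resp.\ the two $\beta$-indices $j\leftrightarrow j'$) then pairs every surviving term with a congruent one, so they cancel in characteristic $2$. This gives $(1+(a,b))(1+(ab,c))\in I$ and hence $[u_1,u_2,u_3]\in I$.

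The step I expect to be the main obstacle is the key congruence: making precise that, after deleting the repeated-index products, the higher matchings genuinely cancel. This rests on the transitivity of two-edge swaps on perfect matchings (so that all $r!$ matchings on a fixed index set are congruent modulo $I$), together with some care when $\alpha,\beta$ (or $\beta,\gamma$) overlap, since then a given index set can be split between the two sides in more than one way and the counting must be organized by the actual vertex sets rather than by the index set alone. Once that congruence is established, the two involutions in the final step are entirely routine.
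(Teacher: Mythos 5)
Your proof is correct, but it is organized around a different key lemma than the paper's. You establish the full linearization congruence $1+(g,h)\equiv\sum_{i\in\operatorname{supp}g,\ j\in\operatorname{supp}h}d_{ij}\pmod I$ for arbitrary square-free $g,h$, which genuinely requires the combinatorial cancellation you flag as the main obstacle: two-edge swaps (the defining relations in $S$) act transitively on the $r!$ perfect matchings between two fixed disjoint $r$-element sets, and $r!\equiv 0\pmod 2$ for $r\ge 2$; your handling of overlapping supports is sound, since a matching with no repeated index automatically has disjoint projections onto the two supports, so one may group the surviving terms by the ordered pair of projection sets. The paper avoids this combinatorics entirely: its congruence (\ref{prodsum}) linearizes only commutators $(y_j,c)$ with a \emph{single generator} on one side, where every higher-order term contains two factors $d_{ji_s}d_{ji_{s'}}$ sharing the index $j$ and hence lies in $I$ individually, with no cancellation between distinct terms needed. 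It then bootstraps to the ``common second entry'' lemma (\ref{acbc}), namely $\bigl((a,c)+1\bigr)\bigl((b,c)+1\bigr)\in I$ for all $a,b,c\in\mathcal G$, and finishes with the rewriting $(b,a)=(b,ab)$, which makes both factors of $[a,b,c]=abc\bigl(1+(c,ab)\bigr)\bigl(1+(b,a)\bigr)$ share the entry $ab$ --- the step that your direct expansion replaces by the two fixed-point-free involutions killing $D(\alpha,\beta)D(\alpha,\gamma)$ and $D(\alpha,\beta)D(\beta,\gamma)$ (each paired sum being literally an element of $S$, as you note). What your route buys is a stronger, reusable statement: an explicit linear normal form for $1+(g,h)$ modulo $I$, from which the lemma falls out by direct computation. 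What the paper's route buys is economy: it never needs transitivity of swaps on matchings nor the parity of $r!$, only single instances of the relations in $S$ together with the repeated-index observation.
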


\begin{proof}
Let $c = \prod_s y_{i_s} \in \mathcal G$. Using (\ref{prodsum}), we have 
\begin{align*}
& \bigl( (y_{j_1}, c) +1 \bigr) \bigl( (y_{j_2}, c) + 1\bigr) =  \bigl( \prod_s (y_{j_1}, y_{i_s}) +1 \bigr) \bigl( \prod_s (y_{j_2}, y_{i_s}) + 1\bigr) 
\\
\equiv & \Big( \sum_s  d_{j_1 i_s} \Big) \Big( \sum_s d_{j_2 i_s} \Big) \pmod{I}
=  \sum_s  d_{j_1 i_s} d_{j_2 i_s} +   \sum_{s<s'} ( d_{j_1 i_s} d_{j_2 i_{s'}} +   d_{j_1 i_{s'}} d_{j_2 i_{s}} ) \equiv 0 \pmod{I} ,
\end{align*}
that is, $ \bigl( (y_{j_1}, c) +1 \bigr) \bigl( (y_{j_2}, c) + 1\bigr) \in I$ for all $c \in \mathcal G$ and all $j_1,j_2$. Similar to (\ref{prodsum}), one can check that 
\begin{equation}
\label{prodsum2}
 \prod_s (y_{i_s},c) + 1 \equiv \sum_s \bigl( (y_{i_s},c) + 1 \bigr) \pmod{I} .
\end{equation}

Let $a,b \in \mathcal G$, $a = \prod_s y_{i_s},$ $b = \prod_{s'} y_{i'_{s'}}.$ Using (\ref{prodsum2}), we have 
\begin{align*}
& \bigl( (a,c) + 1 \bigr) \bigl( (b,c) + 1 \bigr) =  \bigl( \prod_s (y_{i_s},c) + 1 \bigr) \bigl( \prod_{s'} (y_{i'_{s'}},c) + 1 \bigr)
\\
\equiv & \Big(  \sum_s \bigl( (y_{i_s},c) + 1 \bigr) \Big) \Big( \sum_{s'} \bigl( (y_{i'_{s'}},c) + 1 \bigr) \Big) \pmod{I} = \sum_{s,s'}\bigl( (y_{i_s},c) + 1 \bigr) \bigl( (y_{i'_{s'}},c) + 1 \bigr) \equiv 0 \pmod{I},
\end{align*}
that is,
\begin{equation}
\label{acbc}
\bigl( (a,c) + 1 \bigr) \bigl( (b,c) + 1 \bigr) \in I \quad \mbox{ for all } a,b,c \in \mathcal G. 
\end{equation}

Now we are in a position to complete the proof of Lemma \ref{nilp22}. It is clear that it suffices to prove that $[a,b,c] \in I$ for all $a,b,c \in \mathcal G$. Note that, for $a,b \in \mathcal G$, 
\[
[a,b] = ab (1 + b^{-1}a^{-1} ba) = ab \bigl( 1+ (b,a) \bigr) 
\]
(recall that $char \ F = 2$). We have 
\[
[a,b,c] =  \bigl[ ab \bigl( 1 + (b, a ) \bigr) ,c  \bigr] = [ab, c]  \bigl( 1 + (b, a ) \bigr)
= abc  \bigl( 1 + (c, ab) \bigr) \bigl( 1 + (b,a) \bigr) = abc  \bigl( 1 + (c, ab) \bigr) \bigl( 1 + (b,ab) \bigr) 
\]
because $(b,ab) = (b,a) (b,b) = (b,a)$. By (\ref{acbc}), we have $\bigl( 1 + (c, ab) \bigr) \bigl( 1 + (b,ab) \bigr) \in I$ and therefore $[a,b,c] \in I$, as required.
\end{proof}

\begin{lemma}
\label{notin}
For all $\ell  >0$, we have $\bigl( ({y}_{1}, {y}_{2}) + 1 \bigr) \dots \bigl(  ({y}_{{2 \ell -1}}, {y}_{{2\ell}}) + 1 \bigr)  \notin I .$
\end{lemma}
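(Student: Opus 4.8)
The plan is to reduce the statement to a question inside a commutative subalgebra of $F\mathcal G$ and then to detect the given product by an explicit algebra homomorphism.

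First I would pass from the two-sided ideal $I$ of $F\mathcal G$ to a cleaner ideal living inside $F\mathcal G'$. Since $\mathcal G$ is nilpotent of class $2$, every element of $\mathcal G'$ is central in $\mathcal G$, so $F\mathcal G'$ is a central subalgebra of $F\mathcal G$; in particular each $d_{ij} = (y_i,y_j)+1$, and hence every element of $S$, is central. By Lemma \ref{groupG} the set $T = \{ y_{i_1}\cdots y_{i_q} : i_1 < \dots < i_q \}$ (with $1 \in T$ for $q=0$) is a transversal of $\mathcal G'$ in $\mathcal G$, so $F\mathcal G = \bigoplus_{t\in T} t\, F\mathcal G'$ is a free $F\mathcal G'$-module. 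Writing $J$ for the ideal of the commutative ring $F\mathcal G'$ generated by $S$, centrality of $S$ gives $I = (F\mathcal G)\,S = \bigoplus_{t\in T} t\,J$, whence $I \cap F\mathcal G' = J$. Since $\bigl((y_1,y_2)+1\bigr)\cdots\bigl((y_{2\ell-1},y_{2\ell})+1\bigr) = d_{12}d_{34}\cdots d_{(2\ell-1)(2\ell)}$ lies in $F\mathcal G'$, it suffices to show that this element does not lie in $J$.

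Next I would make $F\mathcal G'$ explicit. By the uniqueness in Lemma \ref{groupG}, the subgroup $\mathcal G'$ is the free elementary abelian $2$-group on the commutators $(y_i,y_j)$ with $i<j$, so $F\mathcal G' \cong F[d_{ij} : i<j]/(d_{ij}^2)$, the commutative truncated polynomial algebra whose $F$-basis is the square-free monomials in the $d_{ij}$. In these terms the target element is the ``perfect matching'' monomial $d_{12}d_{34}\cdots d_{(2\ell-1)(2\ell)}$, a basis element of $F\mathcal G'$. The crux of the argument is to detect this monomial modulo $J$ by mapping into another truncated polynomial algebra $B = F[x_1, x_2, \dots]/(x_i^2)$ via $\theta(d_{ij}) = x_i x_j$. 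Since $\theta(d_{ij}^2) = x_i^2 x_j^2 = 0$, the map $\theta$, initially defined on the free commutative algebra, factors through $F\mathcal G'$. Because $\mathrm{char}\,F = 2$ and $B$ is commutative, for distinct $i_1,i_2,i_3,i_4$ we get $\theta(d_{i_1 i_2}d_{i_3 i_4} + d_{i_1 i_3}d_{i_2 i_4}) = x_{i_1}x_{i_2}x_{i_3}x_{i_4} + x_{i_1}x_{i_3}x_{i_2}x_{i_4} = 2\,x_{i_1}x_{i_2}x_{i_3}x_{i_4} = 0$ (and the expression vanishes as well when two of the indices coincide), so $\theta(S) = 0$ and hence $\theta(J) = 0$. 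Finally $\theta(d_{12}d_{34}\cdots d_{(2\ell-1)(2\ell)}) = x_1 x_2 \cdots x_{2\ell} \neq 0$ in $B$, being a product of distinct generators; therefore the matching monomial is not in $J$, and by the first step not in $I$.

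The step I expect to be the main obstacle is the reduction in the first paragraph: verifying carefully that the two-sided ideal $I$, generated by central elements, meets the central subalgebra $F\mathcal G'$ in exactly $J$, which depends on the free $F\mathcal G'$-module decomposition supplied by Lemma \ref{groupG}. Once this is in place, the homomorphism $\theta$ built from the substitution $d_{ij}\mapsto x_i x_j$ does all the remaining work, since characteristic-$2$ commutativity is precisely what forces the generators of $S$ into $\ker\theta$, while the matching monomial survives as a nonzero square-free monomial.
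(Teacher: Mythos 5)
Your proposal is correct and follows essentially the same route as the paper: reduce to $I \cap F\mathcal G' = F\mathcal G'\cdot S$ using the coset decomposition from Lemma \ref{groupG} together with centrality of $S$, then detect the matching monomial via the homomorphism $d_{ij} \mapsto t_i t_j$ into the truncated polynomial algebra $F[t_i]/(t_i^2)$, where characteristic $2$ kills $S$. The paper's proof is the same argument, down to the choice of detecting homomorphism.
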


\begin{proof}
Let $\mathcal G'$ be the derived subgroup of $\mathcal G$; let $c_{ij} = (y_i, y_j)$. Then each element of $\mathcal G'$ can be written in a unique way in the form $c_{j_1 j_2} \dots c_{j_{2q-1}j_{2q}}$ where $q \ge 0$, $j_{2s-1}< j_{2s}$ for all  $s$,  $(j_{2s-1}, j_{2s}) <  (j_{2s'-1}, j_{2s'})$ if $s < s'$. 

Let $F \mathcal G'$ be the group algebra of $\mathcal G'$ over $F$, $F \mathcal G' \subset F \mathcal G$. Recall that  $d_{ij} = c_{ij} +1$. Since the set 
\[
\mathcal G' = \{ c_{j_1 j_2} \dots c_{j_{2q-1}j_{2q}} \mid q \ge 0; \ j_{2s-1}< j_{2s} \mbox{  for all }  s; \  (j_{2s-1}, j_{2s}) <  (j_{2s'-1}, j_{2s'}) \mbox{ if } s < s' \}
\] 
is a basis of $F \mathcal G'$ over $F$, so is the set
\[
\{ d_{j_1 j_2} \dots d_{j_{2q-1}j_{2q}} \mid q \ge 0; \ j_{2s-1}< j_{2s} \mbox{  for all }  s; \  (j_{2s-1}, j_{2s}) <  (j_{2s'-1}, j_{2s'}) \mbox{ if } s < s' \} .
\] 
It follows that $F \mathcal G'$ is a unital $F$-algebra generated by pairwise commuting elements $d_{ij}$ subject to the relations $d_{ij}^2 = 0$, $d_{ij} = d_{ji}$ for all $i,j$ and $d_{ii} =0$ for all $i$.

By Lemma \ref{groupG}, the group $\mathcal G$ is a disjoint union of the sets $y_{i_1} \dots y_{i_q} \mathcal G'$ $(q \ge 0, 0< i_1 < i_2 < \dots < i_q)$. Hence, $F \mathcal G$ is a direct sum of the vector subspaces $y_{i_1} \dots y_{i_q} F \mathcal G'$,
\[
F \mathcal G = \bigoplus_{q \ge 0, \ 0< i_1 < i_2 < \dots < i_q} y_{i_1} \dots y_{i_q} F \mathcal G' .
\]
Recall that $I$ is a two-side ideal of $F \mathcal G$ generated by $S$. Since $S$ is central in $F \mathcal G$, we have
\[
I = F \mathcal G \cdot S = \bigoplus_{q \ge 0, \ 0< i_1 < i_2 < \dots < i_q} y_{i_1} \dots y_{i_q} F \mathcal G' \cdot S.
\]
It follows that $I \cap F \mathcal G' = F \mathcal G' \cdot S$ so to prove the lemma one has to check that $d_{1 2} \dots d_{(2 \ell -1) {2\ell}} \notin F \mathcal G' \cdot S$, that is, to check that the product $d_{1 2} \dots d_{(2 \ell -1) {2 \ell}}$ does not belong to the ideal of $F \mathcal G'$ generated by $S$. However, this is the case because the set $S$ consists of the elements $d_{i_1 i_2} d_{i_3 i_4} + d_{i_1 i_3} d_{i_2 i_4}$. 

Indeed, let $P=F [t_i \mid i = 1,2, \dots ]$ be the $F$-algebra of (commutative) polynomials in $t_i$ and let $\mathcal I$ be the ideal of  $P$ generated by the set $\{ t_i^2 \mid i = 1,2, \dots \}$. Then the map 
$\psi (d_{ij} )\rightarrow t_i t_j + \mathcal I$ can be extended up to a homomorphism $F \mathcal G' \rightarrow P/ \mathcal I$ because $\psi (d_{ij}^2) \equiv 0 \pmod{ \mathcal I}$, $\psi (d_{ij}) = \psi (d_{ji})$ and $\psi (d_{ii} ) \equiv 0 \pmod{\mathcal I}$. Since $\psi (d_{i_1 i_2} d_{i_3 i_4} + d_{i_1 i_3} d_{i_2 i_4} ) = 2 t_{i_1} t_{i_2} t_{i_3} t_{i_4} + \mathcal I = \mathcal I$ (recall that $char \ F = 2$), we have $\psi (S) = 0$. However,  $\psi (d_{12} \dots d_{(2 \ell -1) 2 \ell} ) = t_1 \dots t_{2 \ell } + \mathcal I \ne 0$ so $d_{12} \dots d_{(2 \ell -1) 2 \ell} \notin F \mathcal G' \cdot S = I \cap F \mathcal G'$ and, therefore, $d_{12} \dots d_{(2 \ell -1) 2 \ell} \notin I$, as required. 
\end{proof}

Now we are in a position to complete the proof of Theorem \ref{maintheorem2}. Let $\mathcal G_r$ be the subgroup of $\mathcal G$ generated by $y_1, \dots , y_r$; let $I_r = I \cap F \mathcal G_r$. Take $G = F \mathcal G / I$, $H = F \mathcal G_r /I_r$ where $r = m+n-4= 2(m'+n'-2)$. Take $A = G \otimes H$. By Lemma \ref{nilp22}, we can apply Lemma \ref{cl} and Corollaries \ref{nilp} and \ref{nilp2}.

Let $k = m' +n' -1$; note that $2k >r$. We claim that  $[f_1,f_2] \dots [f_{2k-1},f_{2k}] \in I_r$ for all $f_i \in F \mathcal G_r$. Indeed, we may assume without loss of generality that $f_i \in \mathcal G_r$ for all $i$. Then
\[
[f_1,f_2] \dots [f_{2k-1},f_{2k}] = f_1 f_2 \dots f_{2k} \bigl( (f_1,f_2)+1 \bigr) \dots \bigl( (f_{2k-1}, f_{2k}) + 1 \bigr) .
\]
It is clear that, for each $s$, $(f_{2s-1}, f_{2s}) = \prod_t c_{i_{st} j_{st}} $ for some commutators $c_{i_{st} j_{st}} = (y_{i_{st}}, y_{j_{st}})$. Let $d_{i_{st} j_{st}} = c_{i_{st} j_{st}} + 1$; then $c_{i_{st} j_{st}} = d_{i_{st} j_{st}} + 1$.  We have
\[
(f_{2s-1}, f_{2s}) + 1 = \prod_t c_{i_{st} j_{st}}  +1  = \Big( \prod_t ( d_{i_{st} j_{st}}   +1) \Big) +1 =  \prod_t d_{i_{st} j_{st}}   + \dots +  \sum_{t<t'} d_{i_{st} j_{st}}  d_{i_{st'} j_{st'}}  + \sum_t d_{i_{st} j_{st}} .
\]
It follows that the product $\bigl( (f_1,f_2)+1 \bigr) \dots \bigl( (f_{2k-1}, f_{2k}) + 1 \bigr)$ can be written as a sum of products of the form 
\begin{equation}
\label{product2}
d_{q_1 q_2} \dots d_{q_{2 \ell -1} q_{2 \ell}} =   \bigl( (y_{q_1}, y_{q_2}) + 1 \bigr) \dots \bigl( (y_{q_{2 \ell -1}}, y_{q_{2 \ell}}) + 1 \bigr)
\end{equation}
where $\ell \ge k$. Since $2 \ell \ge 2k >r$, in the product ( \ref{product2}) we have $q_{t}= q_{t'}$ for some $t < t'$. It follows that each product (\ref{product2}) belongs to $I_r$ and so does the product $\bigl( (f_1,f_2)+1 \bigr) \dots \bigl( (f_{2k-1}, f_{2k}) + 1 \bigr)$. Hence,  $[f_1,f_2] \dots [f_{2k-1},f_{2k}] \in I_r$, as claimed.

For any $u \in F \mathcal G$, let $\bar{u} = u + I \in F \mathcal G /I$. Since one can view the algebra $F \mathcal G_r /I_r$ as a subalgebra of $F \mathcal G /I$, we also write $\bar{u} = u + I_r \in F \mathcal G_r /I_r$ for $u \in F \mathcal G_r$. 

By the claim above, 
$[\bar{f}_1,\bar{f}_2] \dots [\bar{f}_{2k-1},\bar{f}_{2k}]=0$ for all $\bar{f}_i \in H$. Hence, by Corollary \ref{nilp}, we have $[u_1, \dots , u_{2k+1}]=0$ for all $u_i \in G \otimes H$ , that is,
\[
[u_1, \dots ,u_{m+n-1}] =0
\]
for all $u_1, \dots , u_{m+n-1}  \in A$, as required. 

Further, take  $v_1 = \bar{y}_1 \otimes 1$, $v_i = \bar{y}_i \otimes \bar{y}_{i-1}$ $(i = 2, \dots , 2m'-1)$, $v_{2m'}=\bar{y}_{2m'} \otimes 1$, $w_1 = \bar{y}_{2m'+1} \otimes 1$, $w_j = \bar{y}_{2m'+j} \otimes \bar{y}_{2m'+j-3}$ $(j = 2, \dots , 2n'-1)$, $w_{2n'}=\bar{y}_{2m'+2n'} \otimes 1$. Note that $[\bar{y}_i,\bar{y}_j] = \bar{y}_i \bar{y}_j \bigl( (\bar{y}_j, \bar{y}_i) + 1 \bigr) = \bar{y}_i \bar{y}_j \bigl( (\bar{y}_i, \bar{y}_j) + 1 \bigr) $. By Corollary \ref{nilp2}, we have
\begin{align*}
[v_1, \dots , v_{2m'}] [w_1, \dots , w_{2n'}] =  & \ [\bar{y}_1,\bar{y}_2] \dots [\bar{y}_{2m'-1}, \bar{y}_{2m'}] [\bar{y}_{2m'+1}, \bar{y}_{2m'+2}] \dots [\bar{y}_{2m'+2n'-1}, \bar{y}_{2m'+2n'}]   
\\
\otimes  & \ [\bar{y}_1, \bar{y}_2] \dots [\bar{y}_{2m'-3},\bar{y}_{2m'-2}] [\bar{y}_{2m'-1},\bar{y}_{2m'}] \dots [\bar{y}_{2m'+2n'-5},\bar{y}_{2m'+2n'-4}] 
\\
=  & \  \bar{y}_1 \bar{y}_2 \dots \bar{y}_{2m'+2n'}  \bigl( (\bar{y}_1, \bar{y}_2) + 1 \bigr)  \dots \bigl( (\bar{y}_{2m'+2n'-1}, \bar{y}_{2m'+2n'}) + 1 \bigr) 
\\
\otimes  & \ \bar{y}_1 \bar{y}_2 \dots  \bar{y}_{2m'+2n'-4}  \bigl( (\bar{y}_1, \bar{y}_2) + 1 \bigr)  \dots \bigl( (\bar{y}_{2m'+2n'-5}, \bar{y}_{2m'+2n'-4}) + 1 \bigr)  
\end{align*}
so, by Lemma \ref{notin}, $ [v_1, \dots , v_{2m'}] [w_1, \dots , w_{2n'}] \ne  \ 0$, as required.

This completes the proof of Theorem \ref{maintheorem2}.
\end{proof}

\begin{proof}[Proof of Theorem \ref{maintheorem1}]
Let $A$ be the algebra described in Theorem \ref{maintheorem2}. Define a homomorphism $\phi : F \langle X \rangle \rightarrow A$ by 
\[
\phi (x_i) =
\left\{
\begin{array}{ccc}
v_i  & \mbox{ if } & i = 1,\dots ,m;
\\
w_{i-m} & \mbox{ if } & i = m+1, \dots , m+n;
\\
0 & \mbox{ if } & i>m+n.
\end{array}
\right.
\]
Then, on one hand, $\phi \bigl( T^{(m+n-1)} \bigr) = 0$ by the item i) of Theorem \ref{maintheorem2}. On the other hand, 
\[
\phi \bigl( [x_1, \dots , x_m] [x_{m+1}, \dots , x_{m+n}] \bigr) = [v_1, \dots , v_m] [w_1, \dots , w_n] \ne 0
\]
by the item ii) of Theorem \ref{maintheorem2} so $\phi \bigl( T^{(m)} T^{(n)} \bigr) \ne 0$. It follows that 
\[
T^{(m)} T^{(n)} \nsubseteq T^{(m + n -1)},
\]
as required.
\end{proof}

\bigskip
\noindent
\textbf{Remarks.}
1. For each $\ell \ge 1$, one can choose elements $z_1, \dots , z_{2 \ell}$ in the algebra $A$ described in Theorem \ref{maintheorem2} in such a way that 
\[
[v_1, \dots , v_m] [w_1, \dots , w_n][z_1,z_2] \dots [z_{2 \ell - 1}, z_{2 \ell}] \ne 0
\]
in $A$. For instance, if $char \ F \ne 2$ then one can choose $z_i = e_{m+n+i} \otimes 1$ $(i = 1, \dots , 2 \ell )$. It follows that if $m = 2m'$ and $n = 2 n'$ are even positive integers then, for each $\ell \ge 1$,
\[
T^{(m)} T^{(n)} \bigl( T^{(2)} \bigr)^{\ell} \nsubseteq T^{(m+n-1)}.
\]

2. Let $X_k = \{ x_1, x_2, \dots , x_k \}$ and let $F \langle X_k \rangle$ be the free unital associative $F$-algebra freely generated by $X_k$. Let $T^{(n)}_k=T^{(n)}(F \langle X_k \rangle )$ be the two-sided ideal of $F \langle X_k \rangle$ generated by all commutators $[a_1, a_2, \dots , a_n]$ $(a_i \in F \langle X_k \rangle )$. If $k \ge m+n$ then Theorem \ref{maintheorem1} holds for the ideals $T^{(n)}_k$, with the same proof. However, Theorem \ref{maintheorem1} fails, in general, for small $k$: for instance, one can check that  if $k \le 3$ then $T^{(2)}_k T^{(2)}_k \subset T^{(3)}_k$. Moreover, Dangovski \cite[Theorem 3.1]{Dangovski15} has recently proved that $T^{(m)}_2 T^{(n)}_2 \subset T^{(m+n-1)}_2$ for all $m,n \ge 2$ so Theorem \ref{maintheorem1} always fails for $k = 2$. 

3. To prove Theorem \ref{maintheorem2} one can choose the algebra $A$ different from one used in our proof. For example, let $F$ be any field and let $r = m+n-4= 2(m'+n'-2)$. Let $A = F \langle X \rangle /T^{(3)} \otimes F \langle X_r \rangle /T^{(3)}_r$ where $X_r = \{ x_1, \dots , x_r \}$ and $T^{(3)}_r = T^{(3)} \bigl( F \langle X_r \rangle \bigr) =  T^{(3)} \cap F \langle X_r \rangle.$ Then $A$ satisfies the conditions i) and ii) of Theorem \ref{maintheorem2}; one can check this using a description of a basis of $F \langle X \rangle /T^{(3)}$ over $F$. Such a description can be deduced, for instance, from \cite[Proposition 3.2]{BEJKL12} or found (if $char \ F \ne 2$) in \cite[Proposition 9]{BKKS10}.

Our choice of the algebra $A$ in the proof of Theorem \ref{maintheorem2} was made with a purpose to have the paper self-contained. 

4. The tensor products of the form $E \otimes E_r \otimes \dots \otimes E_s$ were used to study the polynomial identities of  Lie nilpotent associative algebras over a field of characteristic $0$ by Drensky \cite[Section 5]{Drensky84}.

\section*{Acknowledgments}

This work was partially supported by CNPq grants 307328/2012-0 and 480139/2012-1 and by RFBR grant 15-01-05823.

\end{document}